\newtheorem{Thm}{Theorem}[section]
\newtheorem{Lem}[Thm]{Lemma}
\newtheorem{Prop}[Thm]{Proposition}
\theoremstyle{definition}
\newtheorem{Rem}[Thm]{Remark}
\begin{document}

\title[]{An equivalence criterion for infinite products of Cauchy measures}
\author{Kazuki Okamura}
\date{}
\address{School of General Education, Shinshu University, 3-1-1, Asahi, Matsumoto, Nagano, 390-8621, JAPAN.} 
\email{kazukio@shinshu-u.ac.jp} 
\keywords{Cauchy distribution, equivalence of measures, Kakutani dichotomy}
\subjclass[2010]{60G30, 60E07}
\maketitle

\begin{abstract}

We give an equivalence-singularity criterion for infinite products of Cauchy measures under simultaneous shifts of the location and scale parameters.
Our result is an extension of  Lie and Sullivan's result  giving an equivalence-singularity criterion  under dilations of scale parameters.  
Our proof utilizes McCullagh's parameterization of the Cauchy distributions and maximal invariant, and a closed-form formula of the Kullback-Leibler divergence between two Cauchy measures given by Chyzak and Nielsen. 

\end{abstract}

\section{Introduction}

The purpose of this note is to establish an equivalence criterion for two infinite products of Cauchy measures. 
Let $(Z_n)_n$ be independent Cauchy random variables with location $z_{n1} \in \mathbb{R}$ and scale $z_{n2} > 0$, and   $(W_n)_n$ be independent Cauchy random variables with location $w_{n1} \in \mathbb{R}$ and scale $w_{n2} > 0$.  
We will give a necessary and sufficient condition for the equivalence of the laws of $(Z_n)_n$ and $(W_n)_n$ in terms of $(z_{n1}, z_{n2}, w_{n1}, w_{n2})_n$. 

The Cameron-Martin theorem characterizes the directions in which an infinite-dimensional Gaussian measure can be translated under the constraint that the translations  preserve equivalence of the original and translated measures. 
Hajek \cite{Hajek1958a, Hajek1958b} and Feldman \cite{Feldman1958} establish a dichotomy between two Gaussian measures, which are equivalent or singular to each other. 
Bogachev \cite[Theorems 2.7.2 and 2.12.9]{Bogachev1998} and Shiryaev \cite[Theorem 7.6.5]{Shiryaev2018} also deal with this issue.  
It would be interesting to consider this kind of equivalence-singularity dichotomy  for infinite-dimensional measures other than the infinite-dimensional Gaussian measures.  
Recently, Lie and Sullivan \cite{Lie2018} gave a characterization of equivalence of measures  for infinite-dimensional Cauchy measures which could be regarded as an analogue of the Cameron-Martin theorem.
However, \cite{Lie2018} deals with  dilations of the scale parameters only, more specifically, \cite{Lie2018} considers the case that all locations are identical with each other,  that is, $z_{n1} = w_{n1}$ for every $n \ge 1$. 

We extend  \cite{Lie2018} to a more general case.  
We deal with simultaneous changes of the location and scale parameters. 
In a manner similar to \cite{Lie2018}, 
our proof depends on Kakutani's theorem \cite{Kakutani1948} for the equivalence-singularity dichotomy of infinite product measures. 
However, our proof is different from \cite{Lie2018}. 
Our proof is simpler and shorter. 
Indeed, we do not use a Taylor expansion of an integral for the scale parameter.  
Our main tool is McCullagh's parametrization and maximal invariant \cite{McCullagh1993}. 
We will also use an explicit formula of Kullback-Leibler divergence between two Cauchy distributions obtained by Chyzak and Nielsen \cite{Chyzak2019}. 

\subsection{Framework and main result}

Throughout this note, for $z \in \mathbb{C}$, $\textup{Re}(z)$ and $\textup{Im}(z)$ are the real and imaginary parts of $z$. 
Let $i$ be the imaginary unit. 
Let the upper-half plane $\mathbb{H} := \left\{x+ y i | x \in \mathbb{R}, y > 0 \right\}$. 
For $z \in \mathbb{H}$, 
let $P_z$ be the Cauchy distribution with parameter $z$ equipped with McCullagh's parametrization \cite{McCullagh1993}, that is, 
the density function of $P_z$ is given by $\displaystyle \frac{\textup{Im}(z)}{\pi |x - z|^2}$.
For an infinite sequence $z = (z_n)_n$ in $\mathbb{H}$, 
we let $\displaystyle \bigotimes_{n} P_{z_n}$ be the product probability measure of a sequence of probability measures $(P_{z_n})_n$ on the product space $\mathbb{R}^{\mathbb{N}}$ equipped with the cylindrical $\sigma$-algebra.  
For two measures $\mu$ and $\nu$ on a common probability space such that $\mu$ is absolutely continuous with respect to $\nu$, 
we denote the Radon-Nikodym derivative of $\mu$ with respect to $\nu$ by $\dfrac{d\mu}{d\nu}$. 
We let $E^{P}$ be the expectation with respect to a probability measure $P$. 

\begin{Thm}\label{main}
For every two infinite sequences $z = (z_n)_n$ and $w = (w_n)_n$ of $\mathbb{H}$, we have that\\ 
(i) If $\displaystyle \sum_{n = 1}^{\infty} \frac{|z_n - w_n|^2}{\textup{Im}(z_n) \textup{Im}(w_n)  } < +\infty$, then, $\displaystyle \bigotimes_{n} P_{z_n}$ and $\displaystyle \bigotimes_{n} P_{w_n}$ are equivalent, that is, 
$\displaystyle \bigotimes_{n} P_{z_n}$ is absolutely continuous with respect to $\displaystyle \bigotimes_{n} P_{w_n}$ and vice versa.\\
(ii)   If $\displaystyle \sum_{n = 1}^{\infty} \frac{|z_n - w_n|^2}{\textup{Im}(z_n) \textup{Im}(w_n)  } = +\infty$, then, $\displaystyle \bigotimes_{n} P_{z_n}$ and $\displaystyle \bigotimes_{n} P_{w_n}$ are singular to each other. 
\end{Thm}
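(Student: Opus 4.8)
The plan is to reduce the whole statement to Kakutani's dichotomy \cite{Kakutani1948} for infinite product measures. For any $z,w\in\mathbb{H}$ the densities $\frac{\mathrm{Im}(z)}{\pi|x-z|^2}$ and $\frac{\mathrm{Im}(w)}{\pi|x-w|^2}$ are everywhere positive and finite, so $P_z$ and $P_w$ are mutually absolutely continuous; writing $\rho(\mu,\nu):=\int\sqrt{(d\mu/d\nu)}\,d\nu$ for the Hellinger affinity, Kakutani's theorem then says that $\bigotimes_n P_{z_n}$ and $\bigotimes_n P_{w_n}$ are equivalent when $\sum_n\bigl(1-\rho(P_{z_n},P_{w_n})\bigr)<\infty$ and singular when the series diverges. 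Hence it suffices to prove the two-sided estimate
\[
 c_1\,\min\!\Bigl(1,\tfrac{|z-w|^2}{\mathrm{Im}(z)\mathrm{Im}(w)}\Bigr)\ \le\ 1-\rho(P_z,P_w)\ \le\ c_2\,\min\!\Bigl(1,\tfrac{|z-w|^2}{\mathrm{Im}(z)\mathrm{Im}(w)}\Bigr)
\]
for absolute constants $c_1,c_2>0$, and to combine it with the elementary fact that for non-negative reals $a_n$ one has $\sum_n a_n<\infty$ if and only if $\sum_n\min(1,a_n)<\infty$.

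For the upper estimate, which already yields part (i), I would use the Jensen-type inequality $1-\rho(\mu,\nu)\le\tfrac12 D_{\mathrm{KL}}(\mu\,\|\,\nu)$ together with the Chyzak--Nielsen formula \cite{Chyzak2019}, which in McCullagh's parametrization reads $D_{\mathrm{KL}}(P_z\,\|\,P_w)=\log\!\bigl(1+\tfrac{|z-w|^2}{4\,\mathrm{Im}(z)\mathrm{Im}(w)}\bigr)$. Since $\log(1+t)\le t$, this gives $1-\rho(P_z,P_w)\le\tfrac18\cdot\tfrac{|z-w|^2}{\mathrm{Im}(z)\mathrm{Im}(w)}$, and as $1-\rho\le 1$ always, the upper estimate follows; summability of $\sum_n\frac{|z_n-w_n|^2}{\mathrm{Im}(z_n)\mathrm{Im}(w_n)}$ then gives $\sum_n(1-\rho_n)<\infty$ and hence equivalence.

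For the lower estimate, which yields part (ii), I would exploit McCullagh's parametrization \cite{McCullagh1993}: every $g\in\mathrm{PSL}(2,\mathbb{R})$ acts on $\mathbb{R}$ by a M\"obius map that pushes $P_z$ forward to $P_{g\cdot z}$, so both $\rho$ and the total variation distance $\|\cdot\|_{\mathrm{TV}}$ are invariant under the diagonal action of $\mathrm{PSL}(2,\mathbb{R})$ on $\mathbb{H}\times\mathbb{H}$. The maximal invariant of this action is the hyperbolic distance, equivalently $u:=\frac{|z-w|^2}{\mathrm{Im}(z)\mathrm{Im}(w)}=4\sinh^2\!\bigl(\tfrac12 d_{\mathbb{H}}(z,w)\bigr)$, so we may assume $z=i$ and $w=is$ with $s=e^{d_{\mathbb{H}}(z,w)}\ge 1$. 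A direct computation with the arctangent distribution function of the Cauchy law gives the closed form $\|P_i-P_{is}\|_{\mathrm{TV}}=\tfrac{2}{\pi}\arctan\!\bigl(\tfrac12\sqrt{u}\,\bigr)$. Feeding this into the standard bound $1-\rho(\mu,\nu)\ge\tfrac12\|\mu-\nu\|_{\mathrm{TV}}^2$ and using $\arctan y\ge\tfrac{\pi}{4}\min(1,y)$ for $y\ge 0$, I get $1-\rho(P_z,P_w)\ge c_1\min(1,u)$; divergence of $\sum_n u_n$ then forces $\sum_n(1-\rho_n)=\infty$ and hence singularity.

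The step I expect to be the main obstacle is the lower estimate in part (ii): one needs a quantitatively sharp lower bound on an $f$-divergence between two concrete Cauchy laws, uniformly across the regime $u\to 0$ (where $1-\rho$ is of exact order $u$) and the regime $u\to\infty$ (where it must be bounded away from $0$). Passing to the standard pair $(i,is)$ through the group action is precisely what makes this feasible, since in that position the total variation distance --- unlike the Hellinger affinity itself, which is a complete elliptic integral --- has an elementary closed form, after which only bookkeeping with elementary inequalities remains.
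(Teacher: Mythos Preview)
Your proof is correct, and for part~(i) it coincides with the paper's argument (Jensen plus the Chyzak--Nielsen formula). For part~(ii), however, your route is genuinely different.

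The paper never obtains a quantitative lower bound on $1-\rho(P_z,P_w)$. Instead it argues in two stages: first, using the maximal-invariant representation $I(z,w)=J(\chi(z,w))$ and the limit $J(t)\to 0$, it shows that equivalence of the product measures forces $\{\chi(z_n,w_n)\}_n$ to be bounded; second, under this boundedness, it proves that $\log\frac{dP_{w_n}}{dP_{z_n}}(x)$ is uniformly bounded in $x$ and $n$, and then invokes the Kolmogorov three-series theorem, the $0$--$1$ law, and the Lebesgue decomposition theory from \cite[Theorems~7.6.1--7.6.2]{Shiryaev2018} to force singularity directly. Your argument bypasses all of this by reducing to the pair $(i,is)$ via the M\"obius action, computing $\|P_i-P_{is}\|_{\mathrm{TV}}=\tfrac{2}{\pi}\arctan\bigl(\tfrac12\sqrt{u}\bigr)$ in closed form, and then feeding it into $1-\rho\ge\tfrac12\|\cdot\|_{\mathrm{TV}}^2$. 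This is shorter and more elementary---it stays entirely within the Hellinger/Kakutani framework and needs no three-series theorem---at the cost of the explicit TV computation (which, as you note, is the one place where a closed form is available; the affinity itself is an elliptic integral). The paper's approach, by contrast, avoids evaluating any integral precisely but pays for it with heavier probabilistic machinery. Both approaches lean on the same M\"obius equivariance; you use it to normalize the pair, the paper uses it to factor the affinity through the maximal invariant $\chi$.
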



\section{Proof}

By adopting a version of Kakutani's theorem as in \cite[Corollary 7.6.3]{Shiryaev2018}  for our framework, we have that 
\begin{Prop}\label{Kakutani}
For every two infinite sequences $z = (z_n)_n$ and $w = (w_n)_n$ of $\mathbb{H}$, we have that\\ 
(i) If $\displaystyle \sum_{n = 1}^{\infty} -\log E^{P_{z_n}}\left[ \sqrt{\frac{d P_{w_n}}{d P_{z_n}}}\right] < +\infty$, then, $\displaystyle \bigotimes_{n} P_{z_n}$ and $\displaystyle \bigotimes_{n} P_{w_n}$ are equivalent, that is, 
$\displaystyle \bigotimes_{n} P_{z_n}$ is absolutely continuous with respect to $\displaystyle \bigotimes_{n} P_{w_n}$ and vice versa.\\
(ii)   If $\displaystyle \sum_{n = 1}^{\infty} -\log E^{P_{z_n}}\left[ \sqrt{\frac{d P_{w_n}}{d P_{z_n}}}\right] = +\infty$, then, $\displaystyle \bigotimes_{n} P_{z_n}$ and $\displaystyle \bigotimes_{n} P_{w_n}$ are singular to each other. 
\end{Prop}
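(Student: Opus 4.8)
The statement is Kakutani's equivalence–singularity dichotomy \cite{Kakutani1948} for infinite product measures, specialized to Cauchy factors, and my plan is to reduce it to the Hellinger-integral form of that dichotomy and then verify its hypotheses in the present setting. Write $P = \bigotimes_n P_{z_n}$ and $Q = \bigotimes_n P_{w_n}$ on $\mathbb{R}^{\mathbb{N}}$ with the cylindrical $\sigma$-algebra, denote by $p_z(x) = \frac{\textup{Im}(z)}{\pi |x-z|^2}$ the density of $P_z$, and set
\[ \rho_n := E^{P_{z_n}}\left[ \sqrt{\frac{d P_{w_n}}{d P_{z_n}}}\right] = \int_{\mathbb{R}} \sqrt{ p_{z_n}(x)\, p_{w_n}(x)}\, dx . \]
First I would record the elementary facts that make the statement well posed. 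Since every Cauchy density is continuous and strictly positive on $\mathbb{R}$, we have $P_{z_n} \sim P_{w_n}$, so each Radon–Nikodym derivative exists and is finite and positive; moreover $0 < \rho_n \le 1$ by the Cauchy–Schwarz inequality, and the last integral shows $\rho_n$ is symmetric in $z_n$ and $w_n$. Consequently $-\log \rho_n \ge 0$, and $\sum_n (-\log \rho_n) < +\infty$ holds if and only if $\prod_n \rho_n > 0$, while $\sum_n (-\log \rho_n) = +\infty$ holds if and only if $\prod_n \rho_n = 0$. This turns the two hypotheses of the proposition into the two alternatives for the Hellinger integral $\prod_n \rho_n$.

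The engine is the likelihood-ratio martingale. Let $\mathcal{F}_n$ be generated by the first $n$ coordinates and put $L_n := \prod_{k=1}^n \frac{d P_{w_k}}{d P_{z_k}}$, a nonnegative $P$-martingale with $E^P[L_n] = 1$; by martingale convergence, $L_n \to L_\infty$ $P$-almost surely. For part (i) I would show that $(\sqrt{L_n})_n$ is Cauchy in $L^2(P)$. Using independence across coordinates and $E^P[L_m] = 1$, a direct computation gives, for $m < n$,
\[ E^P\left[ \left( \sqrt{L_n} - \sqrt{L_m}\right)^2 \right] = 2 - 2 \prod_{k=m+1}^{n} \rho_k . \]
When $\prod_k \rho_k > 0$ the tail products $\prod_{k=m+1}^{n}\rho_k$ tend to $1$, so $\sqrt{L_n}$ converges in $L^2(P)$; hence $L_n \to L_\infty$ in $L^1(P)$ and $E^P[L_\infty] = 1$. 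This says that the absolutely continuous part of $Q$ with respect to $P$ carries full mass, i.e. $Q \ll P$; by the symmetry of $\rho_n$, the same argument with the roles of $z$ and $w$ interchanged yields $P \ll Q$, so $P$ and $Q$ are equivalent.

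For part (ii), when $\prod_k \rho_k = 0$ I would use $E^P[\sqrt{L_n}] = \prod_{k=1}^{n}\rho_k \to 0$ together with Fatou's lemma along the $P$-a.s. convergence $\sqrt{L_n}\to\sqrt{L_\infty}$ to conclude $E^P[\sqrt{L_\infty}] = 0$, hence $L_\infty = 0$ $P$-almost surely. Since the almost-sure limit of the likelihood-ratio martingale is a version of the density of the absolutely continuous part of $Q$ with respect to $P$, this forces that part to vanish, so $P$ and $Q$ are mutually singular.

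The routine computations (the Cauchy–Schwarz bound, the $L^2$ identity, the Fatou step) are standard; the one point needing genuine care, and the only place where more than bookkeeping is required, is the identification used in part (ii): that the $P$-a.s. limit $L_\infty$ of $L_n = dQ_n/dP_n$ is a version of the density of the absolutely continuous part of $Q$ relative to $P$ on the cylindrical $\sigma$-algebra. Here I would lean on the cited formulation \cite[Corollary 7.6.3]{Shiryaev2018}, verifying only that its hypotheses (mutual absolute continuity of each one-dimensional factor, already checked for Cauchy measures) hold verbatim, rather than reproving the underlying measure-theoretic limit theorem. An alternative, fully self-contained route to singularity avoids this identification altogether: run the symmetric argument under $Q$ to obtain $1/L_n \to 0$ $Q$-almost surely, and combine the two statements to exhibit the cylindrical event $A = \{ \limsup_n L_n = 0\}$ with $P(A) = 1$ and $Q(A) = 0$.
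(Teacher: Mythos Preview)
Your argument is correct. The paper, however, does not actually prove this proposition: it is introduced with the single sentence ``By adopting a version of Kakutani's theorem as in \cite[Corollary 7.6.3]{Shiryaev2018} for our framework, we have that'' and nothing more. In other words, the paper treats Proposition~\ref{Kakutani} as a black-box citation, implicitly using only that each Cauchy factor $P_{z_n}$ is equivalent to $P_{w_n}$ so that Shiryaev's hypotheses are met. You instead reproduce the standard likelihood-ratio martingale proof of Kakutani's dichotomy: the $L^2$ identity $E^P[(\sqrt{L_n}-\sqrt{L_m})^2]=2-2\prod_{k=m+1}^n\rho_k$ for uniform integrability in case (i), and Fatou on $E^P[\sqrt{L_n}]=\prod_{k=1}^n\rho_k\to 0$ for case (ii), including the clean self-contained singularity witness $A=\{\limsup_n L_n=0\}$. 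This is more than the paper asks for, but it is the canonical argument behind the cited corollary and your verification that the one-dimensional Cauchy factors are mutually absolutely continuous is exactly the check the citation requires.
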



Let 
\begin{equation}\label{chi-def} 
\chi(z,w) := \frac{|z-w|^2}{\textup{Im}(z) \textup{Im}(w)}. 
\end{equation}

Let the Kullback-Leibler divergence be 
\[ K(P_{z}| P_{w}) := E^{P_z}\left[  \log \frac{d P_z}{d P_w} \right], \ \  z, w \in \mathbb H. \]
We remark that this is symmetric, specifically, 
\begin{equation}\label{KL-sym}
K(P_{z}| P_{w}) = K(P_{w}| P_{z}), \ z, w \in \mathbb{H}. 
\end{equation}

\begin{Prop}[Chyzak-Nielsen   {\cite{Chyzak2019}}]\label{CN}
For $z, w \in \mathbb H$, 
\begin{equation}\label{KL}
K\left(P_{z}| P_{w}\right) = \log\left(1 + \frac{\chi(z, w)}{4}\right). 
\end{equation} 
\end{Prop}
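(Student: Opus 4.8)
The plan is to evaluate the defining integral of $K(P_z|P_w)$ directly, reducing everything to a single classical Poisson-kernel computation. Write $z = a+bi$ and $w = c+di$ with $b,d>0$. Since the density of $P_z$ is $\textup{Im}(z)/(\pi|x-z|^2)$,
\[
K(P_z|P_w) \;=\; \int_{\mathbb R} \frac{b}{\pi\left((x-a)^2+b^2\right)}\,\log\frac{b\left((x-c)^2+d^2\right)}{d\left((x-a)^2+b^2\right)}\,dx .
\]
After the translation $x\mapsto x+a$ this becomes $\log(b/d)$ plus two integrals of the form
\[
J(e,d) \;:=\; \int_{\mathbb R}\frac{b}{\pi(x^2+b^2)}\,\log\left((x-e)^2+d^2\right)\,dx ,
\]
namely $+J(c-a,d)$ and $-J(0,b)$.

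The only real computation is the identity $J(e,d)=\log\left(e^2+(b+d)^2\right)$ for $b,d>0$ and $e\in\mathbb R$. I would prove it by reading $\frac{b}{\pi(x^2+b^2)}$ as the Poisson kernel of the upper half-plane evaluated at $ib$, and writing $\log\left((x-e)^2+d^2\right)=2\,\textup{Re}\,\log\left(x-(e-di)\right)$. Because $e-di$ lies in the \emph{lower} half-plane, $\zeta\mapsto\log(\zeta-(e-di))$ is analytic on the upper half-plane with real part bounded below, hence $2\,\textup{Re}\,\log(\zeta-(e-di))$ is genuinely the Poisson integral of its boundary values; evaluating at $\zeta=ib$ gives $2\,\textup{Re}\,\log\left(-e+(b+d)i\right)=\log\left(e^2+(b+d)^2\right)$. (If one prefers to stay real, one differentiates $J(e,d)$ in $d$ and integrates back using a standard integral; in either case one should retain the weight $b/(\pi(x^2+b^2))$ throughout, as the logarithmic factor alone is not Lebesgue integrable.)

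Substituting back,
\[
K(P_z|P_w) \;=\; \log\frac{b}{d} + \log\left((a-c)^2+(b+d)^2\right) - \log(4b^2) \;=\; \log\frac{(a-c)^2+(b+d)^2}{4bd},
\]
equivalently $K(P_z|P_w)=\log\dfrac{|z-\bar w|^2}{4\,\textup{Im}(z)\,\textup{Im}(w)}$. It remains only to note the algebraic identity $4bd+(a-c)^2+(b-d)^2=(a-c)^2+(b+d)^2$, which with \eqref{chi-def} gives $1+\chi(z,w)/4=\frac{(a-c)^2+(b+d)^2}{4bd}$ and hence \eqref{KL}; this closed form is moreover manifestly symmetric in $z$ and $w$, so it also re-proves \eqref{KL-sym}. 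The step I expect to require care is the Poisson identity for $J(e,d)$ — in particular justifying that the harmonic function really equals the Poisson integral of its mildly unbounded boundary data; everything else is bookkeeping and a one-line algebraic check.

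A shorter but less self-contained alternative is to use that, in McCullagh's parametrization, the sample space and the parameter carry compatible actions of the real Möbius group, so $K(P_z|P_w)=K(P_{gz}|P_{gw})$ for every real Möbius $g$; thus $K(P_z|P_w)$ depends only on the hyperbolic distance between $z$ and $w$, i.e.\ only on $\chi(z,w)$, and one needs merely the normalized case $K(P_{ti}|P_i)=\log\frac{(1+t)^2}{4t}$ (the $e=0$ instance of $J$), which equals $\log(1+\chi(ti,i)/4)$.
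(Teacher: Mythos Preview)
Your argument is correct. The Poisson--kernel evaluation of $J(e,d)$ is the heart of the matter, and your justification is sound: since $e-di$ lies in the lower half-plane, $\zeta\mapsto\log(\zeta-(e-di))$ is holomorphic on $\mathbb H$, its real part is harmonic with only logarithmic growth, and the Poisson representation is then standard (or, as you note, can be sidestepped by differentiating in $d$ and recognising the convolution of two Cauchy kernels). The subsequent algebra and the identification $1+\chi(z,w)/4=\dfrac{(a-c)^2+(b+d)^2}{4bd}$ are straightforward and correct.

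There is nothing to compare against: the paper does not prove Proposition~\ref{CN} at all but simply quotes it from Chyzak--Nielsen~\cite{Chyzak2019}. You have therefore supplied a self-contained proof where the paper relies on an external reference. Your alternative route via M\"obius invariance---reducing to the one-parameter family $(ti,i)$---is in fact precisely the maximal-invariant philosophy the paper exploits elsewhere (Lemma~\ref{i-inv} and the surrounding discussion), so that variant would dovetail nicely with the rest of the argument; either version is acceptable.
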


Now we show Theorem \ref{main} (i). 
By Jensen's inequality, 
\[ \sum_{n = 1}^{\infty} -\log E^{P_{z_n}}\left[ \sqrt{\frac{d P_{w_n}}{d P_{z_n}}}\right]  \le \frac{1}{2} \sum_{n = 1}^{\infty}  E^{P_{z_n}}\left[  -\log\frac{d P_{w_n}}{d P_{z_n}} \right] = \frac{1}{2} \sum_{n = 1}^{\infty} K\left(P_{z_n}|P_{w_n} \right) \] 
\[ \le \frac{1}{8} \sum_{n = 1}^{\infty} \chi(z_n, w_n) < +\infty. \] 
The assertion follows from this and Proposition \ref{Kakutani} (i).

By \cite{McCullagh1993},  
the function $\chi$ defined in \eqref{chi-def} is a {\it maximal invariant} for the action of the special linear group $SL(2, \mathbb{R})$ to $\mathbb{H} \times \mathbb{H}$ defined by 
$$A \cdot (z,w) := \left(\frac{az+b}{cz+d},  \frac{aw+b}{cw+d}\right), \  \ \ A = \begin{pmatrix} a & b \\ c & d \end{pmatrix} \in SL(2, \mathbb{R}), \ z, w \in \mathbb{H}.$$ 
That is, 
\[ \chi(A \cdot z, A \cdot w) = \chi(z,w), \ \ A \in SL(2, \mathbb{R}), \ z, w  \in \mathbb{H}.\]
and it holds that for every $z, w, z^{\prime}, w^{\prime} \in \mathbb{H}$ satisfying that $\chi(z^{\prime}, w^{\prime}) = \chi(z,w)$, there exists $A \in SL(2, \mathbb{R})$ such that $(A \cdot z, A \cdot w) = (z^{\prime}, w^{\prime})$. 
See Eaton \cite[Chapter 2]{Eaton1989} for more details about maximal invariants.

\begin{Lem}\label{i-inv}
Let 
\[ I(z,w) := \frac{\sqrt{\textup{Im}(z) \textup{Im} (w)}}{\pi} \int_{\mathbb R} \frac{1}{|x - z| |x - w|} dx.  \]
Then, 
\[ I\left(A \cdot (z,  w) \right) = I(z,w), \ \ A \in SL(2, \mathbb{R}), \ z, w  \in \mathbb{H}. \]
\end{Lem}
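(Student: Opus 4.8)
The plan is to recognize $I(z,w)$ as the Hellinger affinity between the Cauchy measures $P_z$ and $P_w$ and to exploit that this affinity is left unchanged when both measures are transported along the Möbius map attached to $A$. Writing $p_z(x)=\textup{Im}(z)/\big(\pi|x-z|^2\big)$ for the density of $P_z$, one has $\sqrt{p_z(x)\,p_w(x)}=\dfrac{\sqrt{\textup{Im}(z)\textup{Im}(w)}}{\pi\,|x-z|\,|x-w|}$, so that
\[ I(z,w)=\int_{\mathbb R}\sqrt{p_z(x)\,p_w(x)}\;dx = E^{P_z}\!\left[\sqrt{\frac{dP_w}{dP_z}}\,\right] . \]
In particular $I$ is finite, being at most $1$ by Cauchy-Schwarz; this is the quantity we must show is an invariant.

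Now fix $A=\begin{pmatrix} a & b \\ c & d \end{pmatrix}\in SL(2,\mathbb R)$ and set $g_A(x)=\dfrac{ax+b}{cx+d}$, so that $A\cdot z=g_A(z)$ for $z\in\mathbb H$. The first step is to record the elementary identities, all immediate from $ad-bc=1$: for $x\in\mathbb R$ and $z\in\mathbb H$,
\[ g_A'(x)=\frac{1}{(cx+d)^2}>0, \qquad \textup{Im}\big(g_A(z)\big)=\frac{\textup{Im}(z)}{|cz+d|^2}, \qquad g_A(z)-g_A(x)=\frac{z-x}{(cz+d)(cx+d)} . \]
Since $cx+d$ is real for $x\in\mathbb R$, the last two give $|g_A(x)-A\cdot z|^2=|x-z|^2/\big(|cz+d|^2(cx+d)^2\big)$, whence the pointwise equivariance relation
\[ p_{A\cdot z}\big(g_A(x)\big)\,\big|g_A'(x)\big| = \frac{\textup{Im}(A\cdot z)}{\pi\,|g_A(x)-A\cdot z|^2}\cdot\frac{1}{(cx+d)^2} = \frac{\textup{Im}(z)}{\pi\,|x-z|^2} = p_z(x) , \]
and the same with $w$ in place of $z$. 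Equivalently this says $(g_A)_*P_z=P_{A\cdot z}$, which one may also simply quote from \cite{McCullagh1993}.

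The second step is the substitution $y=g_A(x)$ in the integral for $I(A\cdot z, A\cdot w)$. If $c\neq 0$ the map $g_A$ has a pole at $x=-d/c$, but it restricts to an increasing bijection on each of the intervals $(-\infty,-d/c)$ and $(-d/c,+\infty)$, and the two images together exhaust $\mathbb R\setminus\{a/c\}$; as the densities are non-atomic, omitting the single value $a/c$ (and the single value $-d/c$) is harmless, and when $c=0$ the substitution is a global affine bijection. Using $g_A'>0$ and the pointwise relation for both $z$ and $w$, the expression $\sqrt{p_{A\cdot z}(y)\,p_{A\cdot w}(y)}\,dy$ becomes $\sqrt{p_z(x)\,p_w(x)}\,dx$, and therefore $I(A\cdot z, A\cdot w)=I(z,w)$.

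I do not expect a real obstacle: the only delicate points are keeping straight the distinction between $|cz+d|^2$ for $z\in\mathbb H$ and $(cx+d)^2$ for $x\in\mathbb R$ in the Möbius identities, and handling the pole of $g_A$ in the change of variables. One could avoid the latter case analysis altogether by verifying invariance only on generators of $SL(2,\mathbb R)$: for the translations $x\mapsto x+b$ and the dilations $x\mapsto a^2 x$ it is immediate, leaving the inversion $x\mapsto -1/x$ as the one substantive computation. This lemma will presumably be combined with the maximal-invariant property of $\chi$ to conclude that $I$ is a function of $\chi$ alone, after which $I$ can be evaluated in a convenient normalized configuration.
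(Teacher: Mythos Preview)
Your proof is correct. Both your argument and the paper's ultimately rest on the change of variables $y=g_A(x)$, but the routes to the requisite pointwise identity differ. You recognise $I(z,w)$ as the Hellinger affinity $\int\sqrt{p_z p_w}$ and derive the equivariance $p_{A\cdot z}(g_A(x))\,g_A'(x)=p_z(x)$ by direct algebra on M\"obius maps (the three identities for $g_A'$, $\textup{Im}(g_A(z))$, and $g_A(z)-g_A(x)$); the invariance then follows from the general fact that pushing two measures forward by a common bijection preserves their affinity. The paper instead writes $\sqrt{\textup{Im}(z)}/|x-z|$ as $(\epsilon\,\chi(x+\epsilon i, z+\epsilon i))^{-1/2}$, invokes the already--established $SL(2,\mathbb R)$--invariance of $\chi$, and lets $\epsilon\to 0$ to recover the same pointwise identity before changing variables. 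Your approach is more self-contained and makes the probabilistic meaning transparent (it is essentially the statement $(g_A)_*P_z=P_{A\cdot z}$, which you also note can be quoted from McCullagh); the paper's approach is a pleasant trick that recycles the maximal-invariant property of $\chi$ rather than redoing the M\"obius computations. Your handling of the pole at $x=-d/c$ is fine, and the alternative via generators that you sketch would work equally well.
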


\begin{proof}
In this proof we let 
$$A \cdot z := \frac{az+b}{cz+d},  \  \ \ A = \begin{pmatrix} a & b \\ c & d \end{pmatrix} \in SL(2, \mathbb{R}), \ z  \in \mathbb{H}.$$ 

Since $|x- z| = \left|(x+\epsilon i)-  (z+\epsilon i)\right|$ and $\chi$ is invariant, we have that for every $\epsilon > 0, x \in \mathbb{R}, z \in \mathbb{H}$, 
\[  \frac{\sqrt{\textup{Im}(z)}}{|x - z| } = \frac{1}{\sqrt{\epsilon \chi(x+\epsilon i,  z+\epsilon i)}}  =  \frac{1}{\sqrt{\epsilon \chi(A \cdot (x+\epsilon i),  A \cdot (z+\epsilon i))}}  \]
If we let $\displaystyle  A = \begin{pmatrix} a & b \\ c & d \end{pmatrix}$, then, 
\[ \textup{Im}\left(A \cdot (x+\epsilon i)\right) = \frac{\epsilon}{(cx+d)^2 + c^2 \epsilon^2}.  \]
Hence, 
\[ \frac{1}{\sqrt{\epsilon \chi(A \cdot (x+\epsilon i),  A \cdot (z+\epsilon i))}}  = \frac{\sqrt{  \textup{Im}\left(A \cdot (z+\epsilon i)\right)      }}{ |A \cdot (x+\epsilon i) - A \cdot (z+\epsilon i)| \sqrt{(cx+d)^2 + c^2 \epsilon^2}}. \]

Therefore, by letting $\epsilon \to +0$, we have that for every $x$ such that $cx +d \ne 0$, 
\[  \frac{\sqrt{\textup{Im}(z)}}{|x - z| } = \frac{\sqrt{\textup{Im}(A \cdot z)}}{|A \cdot x - A \cdot z| |cx+d|}. \]

Therefore by the change-of-variable formula, 
\[ I(z,w)  = \frac{1}{\pi} \int_{\mathbb R} \frac{\sqrt{\textup{Im}(A \cdot z) \textup{Im}(A \cdot w)}}{|A \cdot x - A \cdot z| |A \cdot x - A \cdot w|  (cx+d)^2}  dx = I(A \cdot z, A \cdot w). \]
\end{proof}

By Lemma \ref{i-inv} and \cite[Theorem 2.3]{Eaton1989},  
we see that there exists a unique function $J : [0, \infty)  \to [0, \infty)$ such that 
\begin{equation}\label{rep}
J(\chi(z,w)) = I(z,w), \ z, w \in \mathbb{H}.
\end{equation}  

\begin{Lem}\label{J-diverge}
$\displaystyle \lim_{t \to +\infty} J(t) = 0$.
\end{Lem}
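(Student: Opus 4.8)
The plan is to compute $J$ on a convenient one-parameter family of pairs $(z,w)$ and then let the parameter go to infinity. Since by \eqref{rep} the value $I(z,w)$ depends only on $\chi(z,w)$, it suffices to choose, for each target value $t$ of $\chi$, any pair realizing it. The natural choice is $z = i$ and $w = si$ for $s > 0$, so that $\chi(i, si) = \frac{(s-1)^2}{s}$, which ranges over $[0,\infty)$ as $s$ ranges over $(0,\infty)$; sending $s \to +\infty$ (or $s \to +0$) forces $\chi \to +\infty$. Thus it is enough to show $I(i, si) \to 0$ as $s \to +\infty$.

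Next I would write out $I(i, si)$ explicitly and estimate it. By definition,
\[ I(i, si) = \frac{\sqrt{s}}{\pi} \int_{\mathbb{R}} \frac{dx}{\sqrt{x^2+1}\,\sqrt{x^2+s^2}}. \]
The task is to bound this integral and show that the prefactor $\sqrt{s}$ does not overwhelm its decay. Splitting the integral at $|x| = \sqrt{s}$ is a clean way to do this: on $|x| \le \sqrt{s}$ one has $\sqrt{x^2+s^2} \ge s$, so that piece is at most $\frac{1}{s}\int_{\mathbb{R}} \frac{dx}{\sqrt{x^2+1}}$ restricted to $|x|\le\sqrt s$, which is $O\!\left(\frac{\log s}{s}\right)$; on $|x| \ge \sqrt{s}$ one has $\sqrt{x^2+1} \ge |x| \ge \sqrt{s}$, so that piece is at most $\frac{1}{\sqrt s}\int_{|x|\ge \sqrt s}\frac{dx}{\sqrt{x^2+s^2}} = \frac{1}{\sqrt s}\int_{|x|\ge \sqrt s}\frac{dx}{\sqrt{x^2+s^2}}$, and a further crude bound $\sqrt{x^2+s^2}\ge |x|$ gives $\frac{2}{\sqrt s}\int_{\sqrt s}^{\infty}\frac{dx}{x}$ — which diverges, so one must instead keep $\sqrt{x^2+s^2}\ge s$ there too, giving $\frac{1}{s\sqrt s}\cdot 2\int_{\sqrt s}^\infty\frac{s\,dx}{x\sqrt{1+s^2/x^2}}$; cleanest is simply to substitute $x = sy$ throughout and obtain $I(i,si) = \frac{\sqrt s}{\pi}\int_{\mathbb R}\frac{s\,dy}{\sqrt{s^2y^2+1}\,\sqrt{s^2y^2+s^2}} = \frac{1}{\pi}\int_{\mathbb R}\frac{dy}{\sqrt{y^2+1/s^2}\,\sqrt{y^2+1}}$, and then dominated convergence (or direct evaluation) handles the limit.

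Indeed with the substitution $x = s y$,
\[ I(i, si) = \frac{1}{\pi} \int_{\mathbb{R}} \frac{dy}{\sqrt{y^2 + 1/s^2}\,\sqrt{y^2+1}}. \]
As $s \to +\infty$ the integrand converges pointwise (for $y \ne 0$) to $\frac{1}{|y|\sqrt{y^2+1}}$, which is \emph{not} integrable near $0$; but that is exactly what makes the limit $0$ rather than positive — there is no domination from below, and instead one argues that for each fixed $\delta>0$ the contribution from $|y|\ge\delta$ is bounded and, via the explicit antiderivative, tends to a finite limit, while the growing singular mass near $0$ is killed by the $\frac{1}{s}$-scale cutoff. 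Concretely one can just evaluate: $\int_{\mathbb{R}}\frac{dy}{\sqrt{y^2+a^2}\sqrt{y^2+1}}$ with $a = 1/s \to 0$ behaves like $2\log(1/a) + O(1) = 2\log s + O(1)$ divided by... — no: rather, it is cleaner to go back to $I(i,si) = \frac{\sqrt s}{\pi}\int_{\mathbb R}\frac{dx}{\sqrt{x^2+1}\sqrt{x^2+s^2}}$ and bound the integral by $\int_{\mathbb R}\frac{dx}{\sqrt{x^2+1}\cdot s} + \int_{\mathbb R}\frac{dx}{x^2+1}\cdot\frac 1s$-type splits giving $\frac{C\log s}{s}$, hence $I(i,si)\le \frac{C\log s}{\sqrt s}\to 0$.

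The main obstacle is purely bookkeeping: choosing the split point in the integral so that both pieces genuinely go to zero after multiplication by $\sqrt s$. A split at $|x|=\sqrt s$ works: the inner part contributes $\le \frac{\sqrt s}{\pi}\cdot\frac{1}{s}\int_{-\sqrt s}^{\sqrt s}\frac{dx}{\sqrt{x^2+1}} = O\!\left(\frac{\log s}{\sqrt s}\right)$ and the outer part contributes $\le \frac{\sqrt s}{\pi}\cdot\frac{1}{\sqrt s}\int_{|x|\ge\sqrt s}\frac{dx}{\sqrt{x^2+s^2}}\le \frac{1}{\pi}\int_{|x|\ge\sqrt s}\frac{dx}{|x|}$ — still divergent, so use $\sqrt{x^2+s^2}\ge s$ on the outer part as well to get $\le\frac{1}{\pi s}\int_{|x|\ge\sqrt s}$... again divergent. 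Hence the genuinely correct split is asymmetric: bound the whole thing by $\frac{\sqrt s}{\pi}\int_{\mathbb R}\frac{dx}{\sqrt{x^2+1}\sqrt{x^2+s^2}}\le \frac{\sqrt s}{\pi}\left(\int_{|x|\le 1}\frac{dx}{\sqrt{x^2+1}\cdot s}+\int_{|x|\ge 1}\frac{dx}{|x|\cdot|x|}\cdot\frac{1}{1}\right)$, wait — on $|x|\ge 1$, $\sqrt{x^2+1}\ge|x|$ and $\sqrt{x^2+s^2}\ge s$, giving $\int_{|x|\ge1}\frac{dx}{|x|s}$, divergent again. The clean resolution: on $|x|\ge s$ use $\sqrt{x^2+1}\ge|x|$ and $\sqrt{x^2+s^2}\ge|x|$ to get $\int_{|x|\ge s}\frac{dx}{x^2}=\frac{2}{s}$; on $1\le|x|\le s$ use $\sqrt{x^2+1}\ge|x|$, $\sqrt{x^2+s^2}\ge s$ to get $\int_1^s\frac{2\,dx}{xs}=\frac{2\log s}{s}$; on $|x|\le1$ use $\sqrt{x^2+s^2}\ge s$ to get $\frac{2}{s}$. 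Total: $O\!\left(\frac{\log s}{s}\right)$, so $I(i,si) = O\!\left(\frac{\log s}{\sqrt s}\right)\to 0$, completing the proof.
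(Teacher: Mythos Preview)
Your final three-region split (at $|x|=1$ and $|x|=s$) is correct and yields the quantitative bound $I(i,si)\le \frac{4+2\log s}{\pi\sqrt{s}}\to 0$, which together with your opening reduction (that $s\mapsto\chi(i,si)=(s-1)^2/s$ is a bijection from $[1,\infty)$ onto $[0,\infty)$, so $J(t)=I(i,s(t)i)$ with $s(t)\to\infty$) finishes the lemma. Everything between the opening reduction and that final split, however, is a sequence of abandoned attempts (the $x=sy$ substitution, the $|x|=\sqrt{s}$ split, etc.) and should be deleted; as written this is scratchwork, not a proof.

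The paper takes the same one-parameter family $(\lambda i,i)$ but argues differently for the limit: rather than splitting, it observes that for $\lambda\ge 1$ the integrand $\dfrac{\sqrt{\lambda}}{\sqrt{x^2+\lambda^2}}\cdot\dfrac{1}{\sqrt{x^2+1}}$ is dominated by $\min\{1,|x|^{-1/2}\}\cdot(x^2+1)^{-1/2}$ (from $x^2+\lambda^2\ge\lambda^2$ and $x^2+\lambda^2\ge 2|x|\lambda$ respectively), notes that this dominating function is integrable, and applies dominated convergence since $\sqrt{\lambda/(x^2+\lambda^2)}\to 0$ pointwise. Your approach is more elementary and yields an explicit rate $O((\log s)/\sqrt{s})$; the paper's is two lines once the dominating function is spotted.
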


\begin{proof}
By \cite{McCullagh1993},  
we have that for every $t \ge 0$, there exists $\lambda \ge 1$ such that $\chi(\lambda i, i) = t$. 
As a function of $\lambda$, $\chi(\lambda i, i) = \dfrac{(\lambda -1)^2}{\lambda}$ is strictly increasing if $\lambda \ge 1$. 
It holds that for $\lambda \ge 1$, 
\[ \dfrac{\sqrt{\lambda}}{\sqrt{x^2 + \lambda^2}} \frac{1}{\sqrt{x^2+1}} \le \min\left\{1, |x|^{-1/2} \right\} \frac{1}{\sqrt{x^2+1}} \]
and $\displaystyle \min\left\{1, |x|^{-1/2} \right\} \frac{1}{\sqrt{x^2 +1}}$ is integrable on $\mathbb R$. 

Since $\displaystyle \lim_{\lambda \to \infty} \sqrt{\dfrac{\lambda}{x^2 + \lambda^2}} = 0$, 
we have that by the dominated convergence theorem, 
\[ \lim_{\lambda \to +\infty} I(\lambda i, i) = \lim_{\lambda \to +\infty} \frac{1}{\pi} \int_{\mathbb R}  \dfrac{\sqrt{\lambda}}{\sqrt{x^2 + \lambda^2}} \frac{1}{\sqrt{x^2 +1}}  dx = 0.  \]
\end{proof}

\begin{Lem}\label{chi-bdd}
If $\displaystyle \bigotimes_{n} P_{z_n}$ and $\displaystyle \bigotimes_{n} P_{w_n}$ are equivalent, then, $\left\{\chi(z_n, w_n) \right\}_n$ is bounded in $[0, \infty)$. \\
\end{Lem}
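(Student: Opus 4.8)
The plan is to argue by contrapositive: assuming $\{\chi(z_n,w_n)\}_n$ is unbounded, I will produce a subsequence along which the Hellinger affinities $E^{P_{z_n}}[\sqrt{dP_{w_n}/dP_{z_n}}]$ tend to zero, so that $\sum_n -\log E^{P_{z_n}}[\sqrt{dP_{w_n}/dP_{z_n}}] = +\infty$ and Proposition \ref{Kakutani}(ii) forces singularity, contradicting equivalence. The key observation is that the Hellinger affinity between $P_z$ and $P_w$ is exactly $I(z,w)$: indeed
\[
E^{P_{z}}\left[ \sqrt{\frac{dP_{w}}{dP_{z}}}\right] = \int_{\mathbb R} \sqrt{\frac{\textup{Im}(z)}{\pi |x-z|^2}}\sqrt{\frac{\textup{Im}(w)}{\pi |x-w|^2}}\, dx = \frac{\sqrt{\textup{Im}(z)\textup{Im}(w)}}{\pi}\int_{\mathbb R}\frac{dx}{|x-z||x-w|} = I(z,w).
\]
By the representation \eqref{rep}, this equals $J(\chi(z,w))$.

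Next I would use Lemma \ref{J-diverge}: since $J(t)\to 0$ as $t\to+\infty$, if $\chi(z_{n_k},w_{n_k})\to+\infty$ along a subsequence, then $E^{P_{z_{n_k}}}[\sqrt{dP_{w_{n_k}}/dP_{z_{n_k}}}] = J(\chi(z_{n_k},w_{n_k}))\to 0$. In particular, for all large $k$ the affinity is at most, say, $1/2$, so $-\log E^{P_{z_{n_k}}}[\sqrt{dP_{w_{n_k}}/dP_{z_{n_k}}}] \ge \log 2 > 0$ for infinitely many $k$. Since every term of the series $\sum_n -\log E^{P_{z_n}}[\sqrt{dP_{w_n}/dP_{z_n}}]$ is nonnegative (the affinity of two probability measures never exceeds $1$), the presence of infinitely many terms bounded below by $\log 2$ makes the whole series diverge. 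Proposition \ref{Kakutani}(ii) then gives that $\bigotimes_n P_{z_n}$ and $\bigotimes_n P_{w_n}$ are singular, contradicting the hypothesis of equivalence. Hence $\{\chi(z_n,w_n)\}_n$ must be bounded.

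The only point requiring a little care is the identification of the Hellinger integrand: one must check that $\sqrt{dP_{w_n}/dP_{z_n}} = \sqrt{\textup{Im}(w_n)|x-z_n|^2 / (\textup{Im}(z_n)|x-w_n|^2)}$ holds $P_{z_n}$-almost everywhere, which is immediate since both $P_{z}$ and $P_{w}$ have everywhere-positive densities on $\mathbb R$, so they are mutually equivalent and the Radon--Nikodym derivative is just the ratio of densities. There is no real obstacle here; the substance of the argument has already been packaged into Lemmas \ref{i-inv} and \ref{J-diverge} together with the maximal-invariant representation \eqref{rep}, and this lemma is essentially the assembly step that feeds into the proof of Theorem \ref{main}(ii).
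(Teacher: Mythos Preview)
Your proof is correct and is essentially the paper's argument phrased contrapositively: the paper observes that equivalence forces, via Proposition~\ref{Kakutani}, $\sum_n -\log E^{P_{z_n}}[\sqrt{dP_{w_n}/dP_{z_n}}]<\infty$, hence $J(\chi(z_n,w_n))=I(z_n,w_n)=E^{P_{z_n}}[\sqrt{dP_{w_n}/dP_{z_n}}]\to 1$, which together with Lemma~\ref{J-diverge} rules out any subsequence with $\chi(z_{n_k},w_{n_k})\to\infty$. The ingredients (the identification of the Hellinger affinity with $I(z,w)$, the representation~\eqref{rep}, and Lemma~\ref{J-diverge}) and the logical flow are the same.
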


\begin{proof}
If $\displaystyle \bigotimes_{n} P_{z_n}$ and $\displaystyle \bigotimes_{n} P_{w_n}$ are equivalent, then, by Proposition \ref{Kakutani}, 
we have that 
\[ \displaystyle \sum_{n = 1}^{\infty} -\log E^{P_{z_n}}\left[ \sqrt{\frac{d P_{w_n}}{d P_{z_n}}}\right] < +\infty, \]
and hence,  
\[ \lim_{n \to \infty} J(\chi(z_n, w_n))=  \lim_{n \to \infty} I(z_n, w_n) = \lim_{n \to \infty} E^{P_{z_n}}\left[ \sqrt{\frac{d P_{w_n}}{d P_{z_n}}}\right] = 1. \]
Now the assertion follows from this and Lemma \ref{J-diverge}. 
\end{proof}

\begin{Lem}\label{log-bdd}
If $\chi(z,w) \le C_1$, then, there exists a constant $C_2$ depending only on $C_1$ such that 
\[ \frac{1}{C_2} \le \inf_{x \in \mathbb{R}} \frac{\textup{Im}(w)}{\textup{Im}(z)} \frac{|x - z|^2}{|x - w|^2} \le \sup_{x \in \mathbb{R}} \frac{\textup{Im}(w)}{\textup{Im}(z)} \frac{|x - z|^2}{|x - w|^2}  \le C_2.  \]
\end{Lem}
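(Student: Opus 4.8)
The plan is to move the pair $(z,w)$ into a canonical position by the $SL(2,\mathbb{R})$-action and then estimate directly. Write $q_{z,w}(x) := \dfrac{\textup{Im}(w)}{\textup{Im}(z)}\dfrac{|x-z|^2}{|x-w|^2}$ for the quantity to be bounded; this is exactly the density ratio $\dfrac{dP_w}{dP_z}(x)$, it is continuous and strictly positive on $\mathbb{R}$, and $q_{z,w}(x)\to \textup{Im}(w)/\textup{Im}(z)$ as $x\to\pm\infty$, so it extends to a continuous, strictly positive function on the one-point compactification $\mathbb{R}\cup\{\infty\}$. Squaring the identity
\[ \frac{\sqrt{\textup{Im}(z)}}{|x-z|} = \frac{\sqrt{\textup{Im}(A\cdot z)}}{|A\cdot x - A\cdot z|\,|cx+d|} \]
obtained in the proof of Lemma \ref{i-inv} (valid whenever $cx+d\ne 0$) and dividing the resulting identity for $z$ by the one for $w$, the factors $(cx+d)^2$ cancel and one gets
\[ q_{z,w}(x) = q_{A\cdot z,\,A\cdot w}(A\cdot x), \qquad A = \begin{pmatrix} a & b \\ c & d \end{pmatrix}\in SL(2,\mathbb{R}),\quad cx+d\ne 0. \]

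First I would fix the canonical pair. As used in the proof of Lemma \ref{J-diverge}, $\chi(\lambda i, i) = (\lambda-1)^2/\lambda$ is continuous and strictly increasing on $[1,\infty)$ with range $[0,\infty)$; solving $\lambda^2 - (t+2)\lambda + 1 = 0$ singles out the branch $\lambda(t) = \tfrac12\bigl(t+2+\sqrt{t^2+4t}\bigr)\ge 1$, which is increasing in $t$. Hence, if $\chi(z,w) = t\le C_1$, then putting $\lambda := \lambda(t)$ gives $1\le \lambda\le \Lambda := \lambda(C_1)$, and since $\chi$ is a maximal invariant for the $SL(2,\mathbb{R})$-action and $\chi(\lambda i, i) = t = \chi(z,w)$, there exists $A\in SL(2,\mathbb{R})$ with $(A\cdot z, A\cdot w) = (\lambda i, i)$.

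Next I would transfer the extreme values. The map $x\mapsto A\cdot x$ is a homeomorphism of $\mathbb{R}\cup\{\infty\}$, and the identity $q_{z,w}(x) = q_{\lambda i, i}(A\cdot x)$ holds whenever $cx+d\ne 0$, hence on a dense set, hence on all of $\mathbb{R}\cup\{\infty\}$ by continuity; therefore $\sup_{\mathbb{R}\cup\{\infty\}} q_{z,w} = \sup_{\mathbb{R}\cup\{\infty\}} q_{\lambda i, i}$ and likewise for the infimum. Since $\mathbb{R}$ is dense in $\mathbb{R}\cup\{\infty\}$ and $q_{z,w}$ is continuous there, $\sup_{x\in\mathbb{R}} q_{z,w}(x) = \sup_{\mathbb{R}\cup\{\infty\}} q_{z,w}$, and similarly for the infimum. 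Finally $q_{\lambda i, i}(x) = \tfrac1\lambda\cdot\tfrac{x^2+\lambda^2}{x^2+1}$ (with value $1/\lambda$ at $\infty$), and for $\lambda\ge 1$ the factor $\tfrac{x^2+\lambda^2}{x^2+1}$ decreases in $x^2$ from $\lambda^2$ at $x=0$ to $1$ at $x=\infty$; thus $\sup_{\mathbb{R}\cup\{\infty\}} q_{\lambda i, i} = \lambda$ and $\inf_{\mathbb{R}\cup\{\infty\}} q_{\lambda i, i} = 1/\lambda$. Combining, $\tfrac1\Lambda\le \tfrac1\lambda = \inf_{x\in\mathbb{R}} q_{z,w}(x)\le \sup_{x\in\mathbb{R}} q_{z,w}(x) = \lambda\le \Lambda$, so the lemma holds with $C_2 := \Lambda = \tfrac12\bigl(C_1+2+\sqrt{C_1^2+4C_1}\bigr)$ (which is $\ge 1$).

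All of this is essentially routine; the one step that asks for a little care is the transfer of the supremum and infimum under $x\mapsto A\cdot x$ — i.e., checking that the single exceptional value with $cx+d=0$ and the behavior at infinity do not affect the extreme values, which is precisely where continuity of $q_{z,w}$ on the compactified line is used. If one prefers to avoid the group action, one may instead reduce by an affine change of variable (a translation and a dilation, both in $SL(2,\mathbb{R})$) to the case $w=i$, $z = a+bi$ with $a^2\le C_1$ and $b+1/b\le C_1+2$, and then estimate $\tfrac1b\cdot\tfrac{(x-a)^2+b^2}{x^2+1}$ directly; this is more elementary but messier, so I would keep the invariance argument since the machinery it uses is already in place.
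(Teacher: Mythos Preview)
Your argument is correct and takes a genuinely different route from the paper. The paper proceeds by a direct elementary estimate: after a translation so that $\textup{Re}(w)=0$, it splits $\dfrac{\textup{Im}(w)}{\textup{Im}(z)}\dfrac{|x-z|^2}{|x-w|^2}$ into two pieces, uses the exact value $\sup_x \dfrac{(x-a)^2}{x^2+b^2}=1+a^2/b^2$, and bounds everything in terms of $C_1$ and a constant $C_3$ controlling $\textup{Im}(z)/\textup{Im}(w)$; symmetry then gives the lower bound. No group action or compactification is invoked. Your approach instead exploits the $SL(2,\mathbb{R})$-invariance already established in Lemma~\ref{i-inv} and the maximal invariance of $\chi$ to move $(z,w)$ to the canonical pair $(\lambda i,i)$, where the ratio is $\tfrac{1}{\lambda}\tfrac{x^2+\lambda^2}{x^2+1}$ and the extreme values are read off exactly; the compactification step cleanly handles the pole of the M\"obius map. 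What you gain is the sharp constant $C_2=\Lambda=\lambda(C_1)$ (the paper's argument yields $2C_3+C_1$ with $C_3=\Lambda$), and a proof that reuses machinery already in place; what the paper's approach gains is self-containment—no appeal to maximal invariance or to the identity from Lemma~\ref{i-inv}, just calculus. Your closing remark about the affine reduction is essentially the paper's method.
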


\begin{proof}
By the definition of $\chi$, we have that there exist $C_3 \in (1, \infty)$ depending only on $C_1$ such that for every $(z,w)$ satisfying $\chi(z,w) \le C_1$, 
\[ \frac{1}{C_3} \le \frac{\textup{Im}(z)}{\textup{Im}(w)} \le C_3. \]

We have that 
\[ \sup_{x \in \mathbb{R}} \frac{\textup{Im}(w)}{\textup{Im}(z)} \frac{|x - z|^2}{|x - w|^2}  \le \frac{\textup{Im}(z)}{\textup{Im}(w)} + \frac{\textup{Im}(w)}{\textup{Im}(z)} \sup_{x \in \mathbb{R}}  \frac{(x - \textup{Re}(z-w))^2}{x^2 + \textup{Im}(w)^2}  \]
\[ \le \frac{\textup{Im}(z)}{\textup{Im}(w)} + \frac{\textup{Im}(w)}{\textup{Im}(z)} \left( 1 + \frac{\textup{Re}(z-w)^2}{\textup{Im}(w)^2} \right) \le 2 C_3 + C_1. \]

The lower bound follows from the upper bound and symmetry about $z$ and $w$. 
\end{proof}

\begin{proof}[Proof of Theorem \ref{main} (ii)]
By Lemma \ref{chi-bdd} and Proposition \ref{Kakutani}, we have that if $\left\{\chi(z_n, w_n) \right\}_n$ is not bounded, then, $\displaystyle \bigotimes_{n} P_{z_n}$ and $\displaystyle \bigotimes_{n} P_{w_n}$ are singular to each other. 
We assume that $\left\{\chi(z_n, w_n) \right\}_n$ is bounded.

Let $X_i (x) := x_i, \ i \in \mathbb{N}$, for $x = (x_i)_i \in \mathbb{R}^{\mathbb{N}}$.  
Then, $\{X_i\}_i$ are independent under $\bigotimes_n P_{w_n}$. 
By the assumption of Theorem \ref{main} (ii) and \eqref{KL-sym}, 
$$ \sum_{n = 1}^{\infty} E^{\bigotimes_n P_{w_n}} \left[ \log \dfrac{d P_{w_n}}{d P_{z_n}} (X_n) \right] = \sum_{n = 1}^{\infty} E^{P_{w_n}} \left[ \log \dfrac{d P_{w_n}}{d P_{z_n}} \right] = \frac{1}{2} \sum_{n = 1}^{\infty} K(P_{w_n} | P_{z_n})  =  +\infty. $$
By Lemma \ref{log-bdd} and 
\[ \frac{d P_{w}}{d P_{z}} (x) = \frac{\textup{Im}(w)}{\textup{Im}(z)} \frac{|x - z|^2}{|x - w|^2}, \ x \in \mathbb{R},   \]
we have that 
$\left\{ \left| \log \dfrac{d P_{w_n}}{d P_{z_n}} (X_n) \right| \right\}_n$ is uniformly bounded.  
Therefore, by the Kolmogorov three series theorem and the Kolmogorov 0-1 law, 
$\displaystyle \sum_{n = 1}^{\infty} \log \dfrac{d P_{w_n}}{d P_{z_n}} (X_n) $ diverges with probability one under $\displaystyle \bigotimes_{n} P_{w_n}$.  

By \cite[Theorem 7.6.1]{Shiryaev2018}, it holds that with probability one under $\displaystyle \bigotimes_{n} P_{w_n}$, 
$\displaystyle \lim_{N \to \infty} \prod_{n=1}^{N} \dfrac{d P_{w_n}}{d P_{z_n}} (X_n)$ exists by allowing the limit takes $+\infty$.  
By this and the Kolmogorov 0-1 law, 
it holds that $\displaystyle \bigotimes_{n} P_{w_n} \left( \sum_{n = 1}^{\infty} \log \dfrac{d P_{w_n}}{d P_{z_n}} (X_n) = +\infty \right) = 1$ or  
 $\displaystyle \bigotimes_{n} P_{w_n} \left( \sum_{n = 1}^{\infty} \log \dfrac{d P_{w_n}}{d P_{z_n}} (X_n) = -\infty\right) = 1$. 
We have that 
$$ \lim_{N \to \infty} \prod_{n=1}^{N} \dfrac{d P_{w_n}}{d P_{z_n}} (X_n) = \dfrac{d \left(\bigotimes_n P_{w_n} \right)}{d \left(\bigotimes_n P_{z_n} \right)}, \  \ \textup{$ \bigotimes_{n} P_{w_n}$-a.s.}$$ 
and 
$$ \bigotimes_{n} P_{w_n} \left( \dfrac{d \left(\bigotimes_n P_{w_n} \right)}{d \left(\bigotimes_n P_{z_n} \right)} = 0 \right)  = 0.$$
Hence, 
\[  \bigotimes_{n} P_{w_n} \left( \dfrac{d \left(\bigotimes_n P_{w_n} \right)}{d \left(\bigotimes_n P_{z_n} \right)} = +\infty \right)  =  \bigotimes_{n} P_{w_n} \left( \sum_{n = 1}^{\infty} \log \dfrac{d P_{w_n}}{d P_{z_n}} (X_n) = +\infty \right) = 1. \]
By \cite[Theorem 7.6.2]{Shiryaev2018}, 
we have the assertion. 
\end{proof}

\begin{Rem}
We can also show Theorem \ref{main} by using \cite{Lie2018}. 
Let $\lambda_n = \lambda_n (z_n, w_n)$ be the number such that $\chi(z_n, w_n) = \chi(\lambda_n i, i)$.  
Then, by \eqref{rep}, 
$I(z_n, w_n) = I(\lambda_n i, i)$. 
By \cite[Theorems 1.1 and 2.1]{Lie2018},  
\[ \sum_{n = 1}^{\infty} -\log I(\lambda_n i, i) < +\infty  \Longleftrightarrow \sum_{n = 1}^{\infty} (\lambda_n - 1)^2 < +\infty \]
\[ \Longleftrightarrow \sum_{n = 1}^{\infty} \frac{|z_n - w_n|^2}{\textup{Im}(z_n) \textup{Im}(w_n)} = \sum_{n = 1}^{\infty} \frac{(\lambda_n - 1)^2}{\lambda_n} < +\infty. \]
\end{Rem}

{\it Acknowledgement.} \ The author is supported by JSPS Grant-in-Aid 19K14549.

\bibliographystyle{amsplain}
\bibliography{Cauchy-dichotomy}

\end{document}